\providecommand{\U}[1]{\protect\rule{.1in}{.1in}}
\newtheorem{theorem}{Theorem}[section]
\newtheorem{proposition}[theorem]{Proposition}
\newtheorem{corollary}[theorem]{Corollary}
\begin{document}

\title{\textsc{Absolutely summing linear operators into spaces with no finite cotype}}
\author{Geraldo Botelho\thanks{Supported by CNPq Project 202162/2006-0.}\,\, and
Daniel Pellegrino\thanks{Supported by CNPq Projects 471054/2006-2 and
308084/2006-3.\hfill\newline2000 Mathematics Subject Classification. Primary
47B10; Secondary 46G25.}}
\date{}
\maketitle

\begin{abstract}
Given an infinite-dimensional Banach space $X$ and a Banach space $Y$ with no
finite cotype, we determine whether or not every continuous linear operator
from $X$ to $Y$ is absolutely $(q;p)$-summing for almost all choices of $p$
and $q$, including the case $p=q$. If $X$ assumes its cotype, the problem is
solved for all choices of $p$ and $q$. Applications to the theory of dominated
multilinear mappings are also provided.

\end{abstract}

\vspace*{-1.0em}

\section*{Introduction}

\indent\indent Given Banach spaces $X$ and $Y$, the question of whether or not
every continuous linear operator from $X$ to $Y$ is absolutely $(q;p)$-summing
has been the subject of several classical works, such as Bennet \cite{Ben},
Carl \cite{carl}, Dubinsky, Pe\l czy\'{n}ski and Rosenthal \cite{DPR}, Garling
\cite{gar}, Kwapie\'{n} \cite{kwa}, Lindenstrauss and Pe\l czy\'{n}ski
\cite{LP} and many others. In this note we address this question for range
spaces $Y$ having no finite cotype (such spaces are abundant in Banach space
theory). For arbitrary domain spaces $X$ the results we prove settle the
question for almost every choice of $p$ and $q$ (Theorem \ref{teorema}),
including the case $p=q$ (Corollary \ref{outro corolario}). For domain spaces
$X$ having cotype $\inf\{q:X\mathrm{~has~cotype~}q\}$ (by far most Banach
spaces enjoy this property) our results settle the question for all choices of
$p$ and $q$ (Corollary \ref{corolario}). Applications of these results to the
theory of dominated multilinear mappings are given in a final section.

\section{Background and notation}

\indent\indent Throughout this note, $n$ will be a positive integer,
$X,X_{1},...,X_{n}$ and $Y$ will represent Banach spaces over $\mathbb{K}%
=\mathbb{R}$ or $\mathbb{C}.$ The symbol $X^{\prime}$ represents the
topological dual of $X$ and $B_{X}$ the closed unit ball of $X$. The Banach
space of all continuous linear operators from $X$ to $Y$, endowed with the
usual $\sup$ norm, will be denoted by $\mathcal{L}(X;Y)$.

Given $1\leq p<+\infty$ and a Banach space $X$, the linear space of all
sequences $(x_{j})_{j=1}^{\infty}$ in $X$ such that $\Vert(x_{j}%
)_{j=1}^{\infty}\Vert_{p}:=(\sum_{j=1}^{\infty}\Vert x_{j}\Vert^{p})^{\frac
{1}{p}}<\infty$ will be denoted by $\ell_{p}(X).$ By $\ell_{p}^{w}(X)$ we
represent the linear space composed by the sequences $(x_{j})_{j=1}^{\infty}$
in $X$ such that $(\varphi(x_{j}))_{j=1}^{\infty}\in\ell_{p}$ for every
$\varphi\in X^{\prime}$. A norm $\Vert\cdot\Vert_{w,p}$ on $\ell_{p}^{w}(X)$
is defined by $\Vert(x_{j})_{j=1}^{\infty}\Vert_{w,p}:=\sup_{\varphi\in
B_{X^{\prime}}}(\sum_{j=1}^{\infty}|\varphi(x_{j})|^{p})^{\frac{1}{p}}$. A
linear operator $u\colon X\longrightarrow Y$ is said to be absolutely
$(q,p)$-summing (or simply $(q,p)$-summing), $1\leq p\leq q<+\infty$, if
$(u(x_{j}))_{j=1}^{\infty}\in\ell_{q}(Y)$ whenever $(x_{j})_{j=1}^{\infty}%
\in\ell_{p}^{w}(X).$ By $\Pi_{q;p}(X;Y)$ we denote the subspace of
$\mathcal{L}(X;Y)$ of all absolutely $(q,p)$-summing operators, which becomes
a Banach space with the norm $\pi_{q;p}(u) := \sup\{ \Vert(u(x_{j}%
))_{j=1}^{\infty}\Vert_{q} : (x_{j})_{j=1}^{\infty} \in B_{\ell_{p}^{w}(X)}
\}$. If $p=q$ we simply say that $u$ is absolutely $p$-summing (or
$p$-summing) and simply write $\Pi_{p}(X;Y)$ for the corresponding
space.\newline\indent Given a Banach space $X$, we put $r_{X}:=\inf
\{q:X\mathrm{~has~cotype~}q\}$. Clearly $2\leq r_{X}\leq+\infty$%
.\newline\indent For $1\leq p<+\infty$, $p^{\ast}$ denotes its conjugate
index, i.e., $\frac{1}{p}+\frac{1}{p^{\ast}}=1$ ($p^{\ast}=1$ if $p=+\infty
$).\newline\indent For the theory of absolutely summing operators and for any
unexplained concepts we refer to Diestel, Jarchow and Tonge \cite{Diestel}.

\section{Main results}

Henceforth $p$, $q$ and $r$ will be real numbers with $1 \leq p \leq q <
+\infty$ and $1 \leq r \leq+\infty$.

\begin{theorem}
\label{main} Let $Y$ be a Banach space with no finite cotype and suppose that
$\ell_{r}$ is finitely representable in $X.$ Then there exists a continuous
linear operator from $X$ to $Y$ which fails to be $(q;p)$-summing if either
$1\leq q<r$ or $p\geq r^{\ast}$.
\end{theorem}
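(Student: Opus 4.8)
The plan is to argue by contradiction via the open mapping theorem, so that it suffices to produce a single sequence of operators witnessing the failure of a uniform summing estimate. First, applying the definition of $\pi_{q;p}$ to a one-term sequence gives $\Vert u\Vert\le\pi_{q;p}(u)$ for every $u\in\Pi_{q;p}(X;Y)$, so the inclusion $\Pi_{q;p}(X;Y)\hookrightarrow\mathcal{L}(X;Y)$ is a bounded linear injection between Banach spaces. Hence, if every operator in $\mathcal{L}(X;Y)$ were $(q;p)$-summing this inclusion would be onto, hence an isomorphism, and there would be a constant $C>0$ with $\pi_{q;p}(u)\le C\Vert u\Vert$ for all $u\in\mathcal{L}(X;Y)$. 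Thus it is enough to build, for each $n$, an operator $u_{n}\in\mathcal{L}(X;Y)$ with $\Vert u_{n}\Vert\le 2$ and $\pi_{q;p}(u_{n})\to\infty$.

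To construct $u_{n}$ I would combine two finite-dimensional facts. Since $\ell_{r}$ is finitely representable in $X$, for each $n$ there are a subspace $E_{n}\subseteq X$ and an isomorphism $T_{n}\colon\ell_{r}^{n}\to E_{n}$ with $\Vert a\Vert_{r}\le\Vert T_{n}a\Vert\le 2\Vert a\Vert_{r}$; in particular $\Vert T_{n}^{-1}\Vert\le1$ and $\Vert T_{n}^{\ast}\Vert=\Vert T_{n}\Vert\le2$. Since $Y$ has no finite cotype, the Maurey--Pisier theorem gives that $\ell_{\infty}$ is finitely representable in $Y$, so for each $n$ there is $S_{n}\colon\ell_{\infty}^{n}\to Y$ with $\Vert b\Vert_{\infty}\le\Vert S_{n}b\Vert\le 2\Vert b\Vert_{\infty}$. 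Let $j_{n}\colon\ell_{r}^{n}\to\ell_{\infty}^{n}$ be the formal identity, which has norm $\le1$ because $\Vert\cdot\Vert_{\infty}\le\Vert\cdot\Vert_{r}$; then $j_{n}\circ T_{n}^{-1}\colon E_{n}\to\ell_{\infty}^{n}$ has norm $\le1$. As $\ell_{\infty}^{n}$ is $1$-injective, extend it to $w_{n}\colon X\to\ell_{\infty}^{n}$ with $\Vert w_{n}\Vert\le1$, and put $u_{n}:=S_{n}\circ w_{n}$, so $\Vert u_{n}\Vert\le2$ and $u_{n}(T_{n}e_{k})=S_{n}(j_{n}e_{k})=S_{n}e_{k}$ for $1\le k\le n$, where $e_{k}$ denotes the relevant $k$-th unit vector.

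For the lower bound on $\pi_{q;p}(u_{n})$ I would test it on the finite sequence $x_{k}^{(n)}:=T_{n}e_{k}$, $1\le k\le n$. On the one hand $\Vert u_{n}(x_{k}^{(n)})\Vert=\Vert S_{n}e_{k}\Vert\ge\Vert e_{k}\Vert_{\infty}=1$, so $\Vert(u_{n}(x_{k}^{(n)}))_{k=1}^{n}\Vert_{q}\ge n^{1/q}$. On the other hand, for $\varphi\in B_{X^{\prime}}$ we have $\varphi(x_{k}^{(n)})=(T_{n}^{\ast}\varphi)_{k}$ with $\Vert T_{n}^{\ast}\varphi\Vert_{r^{\ast}}\le2$, using $(\ell_{r}^{n})^{\prime}=\ell_{r^{\ast}}^{n}$; since $\sup\{\Vert\psi\Vert_{p}:\psi\in\mathbb{K}^{n},\ \Vert\psi\Vert_{r^{\ast}}\le1\}=n^{\max\{0,\,1/p-1/r^{\ast}\}}$ (clear for $p\ge r^{\ast}$, by H\"older for $p<r^{\ast}$), we get $\Vert(x_{k}^{(n)})_{k=1}^{n}\Vert_{w,p}\le 2\,n^{\max\{0,\,1/p-1/r^{\ast}\}}$. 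Combining the two estimates,
\[
\pi_{q;p}(u_{n})\ \ge\ \tfrac{1}{2}\,n^{\,\alpha},\qquad \alpha:=\tfrac{1}{q}-\max\{0,\tfrac{1}{p}-\tfrac{1}{r^{\ast}}\}.
\]
It remains to see that $\alpha>0$ under either hypothesis. If $p\ge r^{\ast}$ then $1/p\le1/r^{\ast}$, so $\alpha=1/q>0$. If $p<r^{\ast}$ (which forces $r^{\ast}>1$, i.e. $r<+\infty$), the hypothesis must be $q<r$, whence $1/q>1/r$ and $\alpha=1/q-1/p+1/r^{\ast}>1/r-1/p+1/r^{\ast}=1-1/p\ge0$. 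Therefore $\pi_{q;p}(u_{n})\to\infty$, which contradicts $\pi_{q;p}(u_{n})\le 2C$, and the theorem follows.

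The only routine parts are the two norm computations in the last paragraph, and the only place the full hypothesis enters is the positivity of $\alpha$, which is precisely what ``$q<r$ or $p\ge r^{\ast}$'' provides; in particular the borderline case $r=+\infty$ (where $r^{\ast}=1\le p$ always) falls under the second alternative. The sole external input is the Maurey--Pisier description of spaces without finite cotype. The one conceptual point, rather than a genuine obstacle, is the combination of the open mapping reduction with the $1$-injectivity of $\ell_{\infty}^{n}$: together they upgrade the finite-dimensional data to honest operators on all of $X$, and then to a single operator failing to be $(q;p)$-summing.
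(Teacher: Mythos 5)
Your proof is correct and follows essentially the same route as the paper's: factor the formal inclusion $\ell_r^n \to \ell_\infty^n$ through $X$ and $Y$ via finite representability ($\ell_r$ in $X$ by hypothesis, $\ell_\infty$ in $Y$ by Maurey--Pisier) and the $1$-injectivity of $\ell_\infty^n$, then conclude with the Open Mapping Theorem. The only real difference is in the lower bound for $\pi_{q;p}$ of the formal inclusion: you test on the single finite sequence of unit vectors and compute $\Vert (e_k)_{k=1}^n \Vert_{w,p} = n^{\max\{0,\,1/p-1/r^*\}}$ exactly, which handles both alternatives of the hypothesis and the case $r=+\infty$ uniformly, whereas the paper uses the sequence $\left(e_j/j^{1/q}\right)_j$ for the case $q<r$ and disposes of $r=+\infty$ by a separate finite-representability reduction.
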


\begin{proof} Assume first that $r < + \infty$. By $(e_j)_{j=1}^\infty$ we mean the canonical unit vectors of
$\ell_r$. If $1\leq q<r$, then $\left(\frac{e_j}{j^{\frac{1}{q}}} \right)_{j=1}^\infty \in \ell_1^w(\ell_r)
\subseteq \ell_p^w(\ell_r)$ because $q < r$ and $\left(\frac{e_j}{j^{\frac{1}{q}}} \right)_{j=1}^\infty
\notin \ell_q(\ell_\infty)$ (obvious). Moreover, for every $n \in \mathbb{N}$,
\[
\sup_{n}\left\| \left(  \frac{e_{j}}{j^{\frac{1}{q}}}\right) _{j=1}^{n}\right\|
_{\ell_p^w(\ell_r)}<+\infty\text{ and }\sup_{n}\left\Vert \left( \frac{e_{j}}{j^{\frac{1}{q}}}\right)
_{j=1}^{n}\right\Vert _{\ell_{q}(\ell_\infty)}=+\infty.
\]
So, for every positive integer $n$, if $u_n \colon \ell_r^n \longrightarrow \ell_\infty^n$ denotes the formal
inclusion, then
$$\sup_{n}\pi_{q;p}( u_{n})=+ \infty {\rm ~and~} \|u_n\| = 1. $$
The same is true if $p \geq r^*$ as $\left(
e_{j}\right)  _{j=1}^{\infty}\in \ell_{r^{\ast}}^{w}(\ell_{r})\subset \ell_{p}%
^{w}(\ell_{r})$ and $\left(  e_{j}\right)  _{j=1}^{\infty}\notin \ell_{q}(\ell_{\infty })$. \\
\indent We know that $\ell_{\infty}$ is finitely representable in $Y$ from the celebrated Maurey-Pisier
Theorem \cite[Theorem 11.1.14 (ii)]{Albiac} and that $\ell_{r}$ is finitely representable in
$X$ by assumption. So, for each $n \in \mathbb{N}$, there exist a subspace $Y_{n}$ of $Y$, a subspace $X_{n}$
of $X$ and linear isomorphisms $T$ and $R$
$$\ell_{\infty}^{n}\overset{T}{\longrightarrow}Y_{n}\overset{T^{-1}%
}{\longrightarrow}\ell_{\infty}^{n} {\rm ~and~}
\ell_{r}^{n}\overset{R}{\longrightarrow}X_{n}\overset{R^{-1}}{\longrightarrow }\ell_{r}^{n}$$
so that $\|T\| = \|R\| =1$, $\left\Vert T^{-1}\right\Vert <2$ and $\left\Vert R^{-1}\right\Vert <2$.
Now consider the chain%
\[
\ell_{r}^{n}\overset{R}{\longrightarrow}X_{n}\overset{R^{-1}}{\longrightarrow
}\ell_{r}^{n}\overset{u_{n}}{\longrightarrow}\ell_{\infty}^{n}\overset
{T}{\longrightarrow}Y_{n}\overset{T^{-1}}{\longrightarrow}\ell_{\infty}^{n}.%
\]
Since $\left\Vert R\right\Vert =1,$ we conclude that%
\[
\pi_{q;p}(u_n)= \pi_{q;p}( u_{n}\circ R^{-1}\circ R)\leq \pi_{q;p}(u_{n}\circ
R^{-1})\left\Vert R\right\Vert =\pi_{q;p}(u_{n}\circ R^{-1}).
\]
Hence the operator $ u_{n}\circ R^{-1} \colon X_{n}\longrightarrow l_{\infty}^{n}$
is so that%
$$\sup_{n}\pi_{q;p}(u_{n}\circ R^{-1}) =+\infty {\rm~and~} \sup_{n}\left\Vert u_{n}\circ R^{-1}\right\Vert
<+\infty.$$
Since
$\ell_{\infty}^{n}$ is an injective Banach space, there is a norm preserving
extension $v_{n} \colon X\longrightarrow \ell_{\infty}^{n}$ of $u_{n}\circ R^{-1}.$ It is immediate that%
\begin{align}
\sup_{n}\pi_{q;p}(v_{n})  =+\infty {\rm ~and~}\sup_{n}\left\Vert v_{n}\right\Vert  <+\infty.\label{aaa}
\end{align}
Consider now the operator $T\circ v_{n} \colon X\longrightarrow Y_{n}.$ Since $\left\Vert
T^{-1}\right\Vert <2,$ we have%
\begin{equation}
\pi_{q;p}(v_{n})=\pi_{q;p}(T^{-1}\circ T\circ v_{n})<2 \pi_{q;p}(T\circ v_{n}).
\label{as}%
\end{equation}
From (\ref{aaa}), (\ref{as})\ and $\left\Vert T\right\Vert =1$
we get%
\begin{align}
\sup_{n}\pi_{q;p}(T\circ v_{n}) =\infty {\rm~and~}\sup_{n} \left\Vert T\circ v_{n}\right\Vert
<+\infty.\label{aa}
\end{align}
By composing $T\circ v_{n}$ with the formal inclusion $i\colon Y_{n}\longrightarrow Y$ we obtain the operator
$ i\circ T\circ v_{n}\colon X\longrightarrow Y$. Combining the injectivity of $\Pi_{q;p}$ \cite[Proposition
10.2]{Diestel} with (\ref{aa}) we have%
\begin{align*}
\sup_{n}\left\Vert i\circ T\circ v_{n}\right\Vert _{as(q;p)}  =\infty {\rm ~and~} \sup_{n}\left\Vert i\circ
T\circ v_{n}\right\Vert  &  <\infty.
\end{align*}
Calling on the Open Mapping Theorem we conclude that $\Pi_{q;p}(X,Y)\neq \mathcal{L}(X,Y).$\\
\indent Suppose now that $\ell_\infty$ is finitely representable in $X$. Since every Banach space is finitely
representable in $c_0$, $\ell_r$ is finitely representable in $c_0$, hence in $\ell_\infty$, for every $1
\leq r < +\infty$. It follows that $\ell_r$ is finitely representable in $X$ for every $1 \leq r < +\infty$,
so the result holds for every $1 \leq r < +\infty$ by the first part of the proof, hence for $r = +\infty$.
\end{proof}

\begin{corollary}
\label{outro corolario} Regardless of the infinite-dimensional Banach space
$X$, the Banach space $Y$ with no finite cotype and $p \geq1$, there exists a
continuous linear operator from $X$ to $Y$ which fails to be $p$-summing.
\end{corollary}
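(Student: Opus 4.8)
The plan is to reduce everything to Theorem \ref{main} by choosing the auxiliary index $r$ so that the two sufficient conditions appearing there, read along the diagonal $q=p$, together exhaust every $p\ge 1$. The decisive observation is that $r=2$ does the job: by Dvoretzky's theorem, $\ell_{2}$ is finitely representable in every infinite-dimensional Banach space, so the hypothesis of Theorem \ref{main} on the domain is automatically satisfied by our arbitrary $X$. (If one prefers to stay within the tools already invoked in the paper, one may instead take $r=r_{X}$ and use the Maurey--Pisier theorem, which guarantees that $\ell_{r_{X}}$ is finitely representable in $X$; since $r_{X}\ge 2$ always, the index bookkeeping below goes through verbatim, splitting on whether $p<r_{X}$ or $p\ge r_{X}\ge 2\ge r_{X}^{\ast}$.)

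With $r=2$ fixed and $q=p$, the two alternatives in Theorem \ref{main} read $1\le p<2$ and $p\ge 2^{\ast}=2$. As $p$ ranges over $[1,+\infty)$, at least one of these holds for every admissible $p$: the first covers $1\le p<2$, and the second covers $p\ge 2$, the borderline value $p=2$ being caught by the non-strict inequality $p\ge r^{\ast}$ (here $r^{\ast}=2$). In either case Theorem \ref{main}, applied with this $X$, this $Y$ (which has no finite cotype by assumption), and these $p=q$ and $r=2$, produces a continuous linear operator from $X$ to $Y$ which fails to be $(p;p)$-summing; and failing to be $(p;p)$-summing is by definition exactly failing to be $p$-summing, which is what the corollary asserts.

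I do not foresee a genuine obstacle here, since the entire content of the corollary is already contained in Theorem \ref{main}. The only point requiring care is the bookkeeping of indices: one must notice that $r=2$ is precisely the value for which the union $\{\,q : q<r\,\}\cup\{\,p : p\ge r^{\ast}\,\}$ covers the whole diagonal $q=p$ with $p\ge 1$, and one must keep the endpoint $p=2$ on the correct side, namely under the second (non-strict) condition rather than the first.
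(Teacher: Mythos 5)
Your proof is correct. Both you and the paper reduce the corollary to Theorem \ref{main} read along the diagonal $q=p$, but the reductions differ in a genuine way. You take $r=2$, justified by Dvoretzky's theorem, and observe that the two hypotheses of Theorem \ref{main} become $1\le p<2$ and $p\ge 2^{\ast}=2$, whose union is all of $[1,+\infty)$; this settles every $p$ in one stroke and needs no further input. The paper instead takes $r=r_{X}$ via the Maurey--Pisier theorem, obtains failure of $p$-summability only for $p\ge r_{X}^{\ast}$ from Theorem \ref{main}, and then invokes the inclusion theorem $\Pi_{r}\subseteq\Pi_{s}$ for $r\le s$ to propagate the failure of one such operator down to every $p\ge 1$. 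Your route is slightly more economical (no inclusion theorem, and Dvoretzky is a weaker tool than Maurey--Pisier), at the cost of producing a possibly different operator for each $p$; the paper's route, by fixing one operator that fails for all $p\ge r_{X}^{\ast}$ and pushing down via the inclusion theorem, yields a single operator failing to be $p$-summing for every $p\ge 1$ simultaneously, which is marginally stronger than what the corollary asserts. Your parenthetical alternative with $r=r_{X}$ and the case split $p<r_{X}$ versus $p\ge r_{X}\ge 2\ge r_{X}^{\ast}$ is also valid and brings you closer to the paper's hypotheses while still avoiding the inclusion theorem.
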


\begin{proof} By Maurey-Pisier Theorem \cite[Theorem 11.3.14]{Albiac} we know that $\ell_{r_X}$ is finitely representable in $X$, so Theorem
\ref{main} provides a continuous linear operator $u \colon X \longrightarrow Y$ which fails to be $p$-summing
for every $p \geq r_X^*$. Since $\Pi_r \subseteq \Pi_s$ if $r \leq s$ \cite[Theorem 2.8]{Diestel}, it follows
that $u$ fails to be $p$-summing for every $p \geq 1$.
\end{proof}

Next result settles the question $\Pi_{q;p}(X;Y)\overset{??}{=} \mathcal{L}%
(X;Y)$ for $Y$ with no finite cotype for almost all choices of $p$ and $q$:

\begin{theorem}
\label{teorema} Let $Y$ be a Banach space with no finite cotype and $X$ be an
infinite-dimensional Banach space. Then: \newline\noindent\textrm{(a)}
$\Pi_{q;p}(X;Y)\neq\mathcal{L}(X;Y)$ if either $1\leq q<r_{X}$ or $p\geq
r_{X}^{\ast}$ or $1<p<r_{X}^{\ast}$ and $q<\frac{1}{\frac{1}{p}-\frac{1}%
{r_{X}^{\ast}}}$.\newline\textrm{(b)} $\Pi_{q;p}(X;Y)=\mathcal{L}(X;Y)$ if
either $p=1$ and $q>r_{X}$ or $1<p<r_{X}^{\ast}$ and $q>\frac{1}{\frac{1}%
{p}-\frac{1}{r_{X}^{\ast}}}$.
\end{theorem}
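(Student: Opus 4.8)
The plan is to deduce part~(a) from Theorem~\ref{main} (and, for one sub-range of parameters, from a re-run of its proof with a better test sequence), and to deduce part~(b) from a cotype estimate together with the Inclusion Theorem. Throughout I use, exactly as in the proof of Corollary~\ref{outro corolario}, that $\ell_{r_X}$ is finitely representable in $X$ by the Maurey--Pisier theorem \cite[Theorem 11.3.14]{Albiac}, so that Theorem~\ref{main} is available with $r=r_X$. This immediately disposes of the two cases $1\le q<r_X$ and $p\ge r_X^{\ast}$ of part~(a).

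For the remaining sub-range of~(a), namely $1<p<r_X^{\ast}$ and $q<\bigl(\tfrac1p-\tfrac1{r_X^{\ast}}\bigr)^{-1}$, I would repeat the argument of Theorem~\ref{main}, but now testing the formal inclusion $\iota_n\colon\ell_{r_X}^n\to\ell_\infty^n$ on the ``flat'' finite sequence $(e_j)_{j=1}^n$ of the canonical basis of $\ell_{r_X}$. One has $\|\iota_n\|=1$, and since $(\ell_{r_X}^n)'=\ell_{r_X^{\ast}}^n$ and $\|(e_j)_{j=1}^n\|_{\ell_p^w(\ell_{r_X}^n)}=\sup_{\|\varphi\|_{r_X^{\ast}}\le1}\|\varphi\|_p=n^{1/p-1/r_X^{\ast}}$ by Hölder,
\[
\pi_{q;p}(\iota_n)\ \ge\ \frac{\|(e_j)_{j=1}^n\|_{\ell_q(\ell_\infty^n)}}{\|(e_j)_{j=1}^n\|_{\ell_p^w(\ell_{r_X}^n)}}\ =\ \frac{n^{1/q}}{n^{1/p-1/r_X^{\ast}}}\ =\ n^{1/q-1/p+1/r_X^{\ast}},
\]
whose exponent is strictly positive precisely under the standing hypothesis on $q$; hence $\sup_n\pi_{q;p}(\iota_n)=+\infty$ while $\sup_n\|\iota_n\|<+\infty$. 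From this point on the transference to $X$ and $Y$ — finite representability of $\ell_{r_X}$ in $X$ and of $\ell_\infty$ in $Y$, injectivity of $\ell_\infty^n$, injectivity of the operator ideal $\Pi_{q;p}$ \cite[Proposition 10.2]{Diestel} and the Open Mapping Theorem — is literally the argument already carried out for Theorem~\ref{main}, and yields $\Pi_{q;p}(X;Y)\ne\mathcal L(X;Y)$.

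For part~(b) with $p=1$ and $q>r_X$: then $X$ has cotype $q$, and it is standard that this forces $\mathrm{id}_X\in\Pi_{q;1}(X;X)$ (the contraction principle together with Kahane's inequality bounds $(\mathbb E\|\sum_j r_jx_j\|^2)^{1/2}$ by a fixed multiple of $\sup_{\varepsilon_j=\pm1}\|\sum_j\varepsilon_jx_j\|=\|(x_j)\|_{w,1}$); by the ideal property every $u\in\mathcal L(X;Y)$ then belongs to $\Pi_{q;1}(X;Y)$ since $u=u\circ\mathrm{id}_X$, so $\Pi_{q;1}(X;Y)=\mathcal L(X;Y)$. For part~(b) with $1<p<r_X^{\ast}$ and $q>\bigl(\tfrac1p-\tfrac1{r_X^{\ast}}\bigr)^{-1}$: from $p<r_X^{\ast}$ we get $p^{\ast}>r_X$, and as $s\downarrow r_X$ the quantity $q_s:=\bigl(\tfrac1p-\tfrac1{s^{\ast}}\bigr)^{-1}$ decreases continuously to $\bigl(\tfrac1p-\tfrac1{r_X^{\ast}}\bigr)^{-1}$, so I may fix $s$ with $r_X<s<p^{\ast}$ and $q_s<q$. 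Since $s>r_X$, the previous case gives $\Pi_{s;1}(X;Y)=\mathcal L(X;Y)$; the Inclusion Theorem \cite[Theorem 10.4]{Diestel} (applicable since $1\le p<s^{\ast}$ and $\tfrac1p-\tfrac1{q_s}=1-\tfrac1s$) gives $\Pi_{s;1}(X;Y)\subseteq\Pi_{q_s;p}(X;Y)$, and $q_s\le q$ gives $\Pi_{q_s;p}(X;Y)\subseteq\Pi_{q;p}(X;Y)$. Chaining these, $\mathcal L(X;Y)=\Pi_{s;1}(X;Y)\subseteq\Pi_{q;p}(X;Y)\subseteq\mathcal L(X;Y)$, hence equality.

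The routine parts — the flat-sequence estimate and the verbatim transference in~(a), and the cotype estimate in~(b) — should present no difficulty. The one delicate point is the choice of the auxiliary exponent $s$ in the last step: it must simultaneously satisfy $r_X<s<p^{\ast}$ (so that $\Pi_{s;1}(X;Y)=\mathcal L(X;Y)$ and the Inclusion Theorem applies) and lie close enough to $r_X$ that $q$ still exceeds the slid threshold $q_s$, which is exactly why one cannot take $s=r_X$ directly.
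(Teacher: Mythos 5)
Your proposal is correct, and for most of the theorem it follows the paper's own route: the first two cases of (a) are obtained from Theorem \ref{main} via Maurey--Pisier exactly as in the paper, and your part (b) is the paper's argument (cotype $q$ for $q>r_X$ gives $\mathrm{id}_X\in\Pi_{q;1}(X;X)$, hence $\Pi_{q;1}(X;Y)=\mathcal{L}(X;Y)$, and then one slides to $\Pi_{q;p}$ via the Inclusion Theorem with an auxiliary exponent; the paper writes $\Pi_{r_X+\varepsilon;1}\subseteq\Pi_{q;p}$ for small $\varepsilon$, which is precisely your choice of $s$ with $r_X<s<p^{\ast}$ and $q_s<q$, and your verification of the Inclusion Theorem's hypotheses is accurate). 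The one genuine divergence is the third case of (a), $1<p<r_X^{\ast}$ and $q<\left(\frac{1}{p}-\frac{1}{r_X^{\ast}}\right)^{-1}$. The paper handles it by a second application of the Inclusion Theorem in the opposite direction: it shows $\Pi_{q;p}(X;Y)\subseteq\Pi_{s;r_X^{\ast}}(X;Y)$ for $s$ sufficiently large (possible because $\frac{1}{p}-\frac{1}{q}<\frac{1}{r_X^{\ast}}$), and the latter class is proper by the already-settled case $p\geq r_X^{\ast}$. You instead go back into the construction of Theorem \ref{main} and test the formal inclusions $\iota_n\colon\ell_{r_X}^n\to\ell_\infty^n$ on the flat sequence $(e_j)_{j=1}^n$; your computation $\|(e_j)_{j=1}^n\|_{w,p}=n^{1/p-1/r_X^{\ast}}$ is correct (H\"older in $\ell_{r_X^{\ast}}^n$, with the constant vector attaining the supremum), so $\pi_{q;p}(\iota_n)\geq n^{1/q-1/p+1/r_X^{\ast}}\to\infty$ exactly on the claimed range, and the transference to $X$ and $Y$ is indeed verbatim from Theorem \ref{main}. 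Both arguments are sound. The paper's is shorter given that Theorem \ref{main} is already in hand; yours is more self-contained and quantitative --- it exhibits the rate of blow-up of the $(q;p)$-summing norms and makes visible why the threshold $\left(\frac{1}{p}-\frac{1}{r_X^{\ast}}\right)^{-1}$ appears --- at the cost of rerunning the finite-representability, injectivity and Open Mapping machinery.
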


\begin{proof} (a) Since $\ell_{r_X}$ is finitely representable in $X$ (Maurey-Pisier Theorem),
the case $1\leq q<r_{X}$ and the case $p\geq r_{X}^{\ast}$ follow from Theorem \ref{main}. Suppose $1<p<r_{X}^{\ast}$
and $q<\frac{1}{\frac{1}{p}-\frac{1}{r_{X}^{\ast}}}$. From the previous cases we know that
$\Pi_{s;r_X^*}(X;Y)\neq \mathcal{L}(X;Y)$ for every $s \geq 1$. So the proof will be complete if we show that
$\Pi_{q;p}(X;Y) \subseteq \Pi_{s;r_X^*}(X;Y)$ for sufficiently large $s$. By \cite[Theorem 10.4]{Diestel} it
suffices to show that there exist a sufficiently large $s$ so that $q \leq s$, $r_X^* \leq s$ and
$\frac{1}{p}-\frac{1}{q}\leq\frac{1}{r_{X}^{\ast}}-\frac{1}{s}$. From
$$\frac{1}{p}-\frac{1}{q}<\frac{1}{p}-\frac{1}{\frac{1}{\frac{1}{p}-\frac
{1}{r_{X}^{\ast}}}}=\frac{1}{p}-\left(  \frac{1}{p}-\frac{1}{r_{X}^{\ast}%
}\right)  =\frac{1}{r_{X}^{\ast}}$$ we can choose $s \geq \max\{q,r_X^*\}$ such that
$\frac{1}{p}-\frac{1}{q}\leq\frac{1}{r_{X}^{\ast}}-\frac{1}{s}$, completing the proof of (a).\\
(b) If $q > r_X$, then $X$ has cotype $q$, hence the identity operator on $X$ is $(q;1)$-summing,
so $\Pi_{q;1}%
(X;Y)=\mathcal{L}(X;Y)$. Suppose $1<p<r_{X}^{\ast}$ and $q>\frac{1}{\frac{1}{p}-\frac{1}{r_{X}^{\ast}}}$.
Calling on \cite[Theorem 10.4]{Diestel} once again we have that $\Pi_{r_{X}+\varepsilon;1}(X;Y)\subset
\Pi_{q;p}(X;Y)$ for a sufficiently small $\varepsilon >0$. From the previous case we know that
$\Pi_{r_{X}+\varepsilon;1}(X;Y) = {\cal L}(X;Y)$, so $\Pi_{q;p}(X;Y) = {\cal L}(X;Y)$ as well.
\end{proof}

The only cases left open are (i) $p=1$ and $q=r_{X}$, (ii) $1<p<r_{X}^{\ast}$
and $q=\frac{1}{\frac{1}{p}-\frac{1}{r_{X}^{\ast}}}$. For spaces $X$ having
cotype $r_{X}$ the problem is completely settled:

\begin{corollary}
\label{corolario} Suppose that $Y$ has no finite cotype and that $X$ is
infinite-dimensional and has cotype $r_{X}$. Then $\Pi_{q;p}(X;Y)=\mathcal{L}%
(X;Y)$ if and only if either $p=1$ and $q\geq r_{X}$ or $1<p<r_{X}^{\ast}$ and
$q\geq\frac{1}{\frac{1}{p}-\frac{1}{r_{X}^{\ast}}}$.
\end{corollary}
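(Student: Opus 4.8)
The plan is to read off both implications from Theorems \ref{main} and \ref{teorema}; the only place where the strengthened hypothesis ``$X$ has cotype $r_{X}$'' (rather than just $r_{X}=\inf\{q:X\text{ has cotype }q\}$) really enters is in disposing of the two boundary cases that were left open after Theorem \ref{teorema}.

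\emph{Necessity.} I would argue by contraposition: assume $(q,p)$ satisfies neither ``$p=1$ and $q\ge r_{X}$'' nor ``$1<p<r_{X}^{\ast}$ and $q\ge\frac{1}{\frac{1}{p}-\frac{1}{r_{X}^{\ast}}}$'', and deduce $\Pi_{q;p}(X;Y)\ne\mathcal{L}(X;Y)$. Split according to $p$. If $p=1$, negating the first alternative forces $q<r_{X}$, and Theorem \ref{teorema}(a) applies in its case $1\le q<r_{X}$. If $p>1$, then either $p\ge r_{X}^{\ast}$, and Theorem \ref{teorema}(a) applies in its case $p\ge r_{X}^{\ast}$, or $1<p<r_{X}^{\ast}$, in which case negating the second alternative forces $q<\frac{1}{\frac{1}{p}-\frac{1}{r_{X}^{\ast}}}$ and Theorem \ref{teorema}(a) applies in its third case. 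These cases are exhaustive. (If $r_{X}=\infty$ there is nothing to prove: the displayed index condition can never hold, while $1\le q<r_{X}$ always does.)

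\emph{Sufficiency.} Since $X$ has cotype $r_{X}$, the identity operator on $X$ is $(r_{X};1)$-summing --- precisely the fact already invoked in the proof of Theorem \ref{teorema}(b), now with the infimum attained. As $\Pi_{r_{X};1}$ is an operator ideal, factoring an arbitrary $u\in\mathcal{L}(X;Y)$ as $u\circ\mathrm{id}_{X}$ yields $\mathcal{L}(X;Y)=\Pi_{r_{X};1}(X;Y)$. It then suffices to verify the inclusion $\Pi_{r_{X};1}(X;Y)\subseteq\Pi_{q;p}(X;Y)$ in each of the two cases of the index condition, using the Inclusion Theorem \cite[Theorem 10.4]{Diestel}. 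For $p=1$ and $q\ge r_{X}$ one needs $r_{X}\le q$ and $1-\frac{1}{r_{X}}\le 1-\frac{1}{q}$, both immediate. For $1<p<r_{X}^{\ast}$ and $q\ge\frac{1}{\frac{1}{p}-\frac{1}{r_{X}^{\ast}}}$ one needs $r_{X}\le q$, which holds because $\frac{1}{p}-\frac{1}{r_{X}^{\ast}}\le\frac{1}{r_{X}}$ (equivalently $\frac{1}{p}\le 1$), together with $1-\frac{1}{r_{X}}\le\frac{1}{p}-\frac{1}{q}$, which rearranges exactly to $\frac{1}{q}\le\frac{1}{p}-\frac{1}{r_{X}^{\ast}}$, i.e.\ the hypothesis on $q$. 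Hence $\mathcal{L}(X;Y)=\Pi_{r_{X};1}(X;Y)\subseteq\Pi_{q;p}(X;Y)\subseteq\mathcal{L}(X;Y)$.

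All the estimates above are routine; the one point to watch --- and the only genuine ``obstacle'' --- is that at the two boundary values ($q=r_{X}$ with $p=1$, and $q=\frac{1}{\frac{1}{p}-\frac{1}{r_{X}^{\ast}}}$ with $1<p<r_{X}^{\ast}$) the gap inequality in the Inclusion Theorem is an equality, so the argument is sharp and leaves no slack. This is exactly where the hypothesis that $X$ \emph{attains} its cotype is used: $\mathrm{id}_{X}$ would in general fail to be $(r_{X};1)$-summing if $X$ only had cotype $r_{X}+\varepsilon$ for every $\varepsilon>0$.
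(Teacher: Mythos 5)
Your proposal is correct and follows essentially the same route as the paper: Theorem \ref{teorema} disposes of everything except the two boundary cases, the hypothesis that $X$ attains its cotype makes $\mathrm{id}_X$ absolutely $(r_X;1)$-summing so that $\mathcal{L}(X;Y)=\Pi_{r_X;1}(X;Y)$, and the Inclusion Theorem \cite[Theorem 10.4]{Diestel} transfers this to the remaining $(q;p)$. The only (immaterial) difference is that you verify the inclusion $\Pi_{r_X;1}\subseteq\Pi_{q;p}$ uniformly for all sufficient pairs, whereas the paper invokes Theorem \ref{teorema}(b) for the non-boundary ones.
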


\begin{proof} As mentioned above, by Theorem \ref{teorema} it suffices to consider the cases
(i) $p=1$ and $q=r_{X}$, (ii) $1<p<r_{X}^{\ast}$ and $q=\frac{1}{\frac{1}{p}-\frac{1}{r_{X}^{\ast}}}$. Since
$X$ has cotype $r_X$, the identity operator on $X$ is $(r_X;1)$-summing, so (i) is done. By \cite[Theorem
10.4]{Diestel} we have that $\Pi_{r_{X};1}(X;Y)\subset \Pi_{\frac{1}
{\frac{1}%
{p}-\frac{1}{r_{X}^{\ast}}};p}(X;Y)$ whenever $1<p<r_{X}^{\ast}$, so (ii) follows from (i).
\end{proof}

Note that Corollary \ref{corolario} improves the linear case of
\cite[Corollary 6]{StudiaD}.

The next consequence of Theorem \ref{teorema}, which is closely related to a
classical result of Maurey-Pisier \cite[Remarque 1.4]{MP} and to \cite[Example
2.1]{Dan}, shows that fixed an infinite-dimensional Banach space $X$, the
number $\inf\{q : \Pi_{q;1}(X;Y)=\mathcal{L}(X;Y)\}$ does not depend on the
Banach space with no finite cotype $Y$.

\begin{corollary}
Let $X$ be an infinite-dimensional Banach space. Then $r_{X} = \inf\{q :
\Pi_{q;1}(X;Y)=\mathcal{L}(X;Y)\}$ regardless of the Banach space $Y$ with no
finite cotype.
\end{corollary}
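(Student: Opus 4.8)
The plan is to establish the two inequalities $\inf A\le r_X$ and $\inf A\ge r_X$, where $A:=\{q:\Pi_{q;1}(X;Y)=\mathcal{L}(X;Y)\}$, by simply reading off what Theorem \ref{teorema} already says in the case $p=1$. Since that theorem is valid for \emph{every} Banach space $Y$ with no finite cotype, the resulting equality will automatically be independent of the choice of such a $Y$, which is precisely the assertion.

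For the upper bound I would invoke Theorem \ref{teorema}(b): with $p=1$ it yields $\Pi_{q;1}(X;Y)=\mathcal{L}(X;Y)$ for every $q>r_X$, that is, $(r_X,+\infty)\subseteq A$. Hence $\inf A\le r_X$, with the usual convention $\inf\emptyset=+\infty$ covering the (trivial) case $r_X=+\infty$.

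For the lower bound I would invoke Theorem \ref{teorema}(a), whose first alternative ``$1\le q<r_X$'' carries no restriction on $p$ and in particular applies to $p=1$: it gives $\Pi_{q;1}(X;Y)\ne\mathcal{L}(X;Y)$, hence $q\notin A$, for every $q$ with $1\le q<r_X$. Thus $A\cap[1,r_X)=\emptyset$, so $\inf A\ge r_X$. Combining the two bounds gives $\inf A=r_X$, as claimed, and the argument is uniform in $Y$.

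There is essentially no obstacle here, since all the work is already contained in Theorem \ref{teorema}; the only point worth a word is the borderline value $q=r_X$, which may or may not belong to $A$ depending on $X$ (it does, for instance, when $X$ has cotype $r_X$, by Corollary \ref{corolario}), but this ambiguity does not affect the value of $\inf A$.
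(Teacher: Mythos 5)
Your argument is correct and is exactly the intended one: the paper states this corollary without proof precisely because it reads off immediately from Theorem \ref{teorema} with $p=1$, as you do (part (a) forbids $q<r_X$, part (b) admits every $q>r_X$, uniformly in $Y$). Your remark on the borderline $q=r_X$ and on the convention for $r_X=+\infty$ is a sensible bonus.
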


\section{Applications to the multilinear theory}

\indent\indent One of the most interesting and most studied multilinear
generalizations of the ideal of absolutely $p$-summing linear operators is the
class of $p$-dominated multilinear mappings. A continuous $n$-linear mapping
$A\colon X_{1}\times\cdots\times X_{n}\longrightarrow Y$ is $(p_{1}%
,\ldots,p_{n})$-dominated, $1\leq p_{1},\ldots,p_{n}<+\infty$, if
$(A(x_{j}^{1},\ldots,x_{j}^{n}))_{j=1}^{\infty}\in\ell_{q}(Y)$ whenever
$(x_{j}^{k})_{j=1}^{\infty}\in\ell_{p_{k}}^{w}(X_{k})$, $k=1,\ldots,n$, where
$\frac{1}{q}=\frac{1}{p_{1}}+\cdots+\frac{1}{p_{n}}$. If $p_{1}=\cdots
=p_{n}=p$ we simply say that $A$ is $p$-dominated. For details we refer to
\cite{BP-proc,Anais}.\newline\indent Continuous bilinear forms on either an
$\mathcal{L}_{\infty}$-space, or the disc algebra $\mathcal{A}$ or the Hardy
space $H^{\infty}$ are 2-dominated \cite[Proposition 2.1]{BP-proc}. On the
other hand, partially solving a problem posed in \cite{BP-proc}, in
\cite[Lemma 5.4]{Jarc} it was recently shown that for every $n\geq3$, every
infinite-dimensional Banach space $X$ and any $p\geq1$, there is a continuous
$n$-linear form on $X^{n}$ which fails to be $p$-dominated. As to
vector-valued bilinear mappings, all that is known, as far as we know, is that
for every $\mathcal{L}_{\infty}$-spaces $X_{1},X_{2}$, every
infinite-dimensional space $Y$ and any $p\geq1$, there is a continuous
bilinear mapping $A:X_{1}\times X_{2}\rightarrow Y$ which fails to be
$p$-dominated \cite[Theorem 3.5]{bot97}. Besides of giving an alternative
proof of \cite[Lemma 5.4]{Jarc}, we fill in this gap concerning vector-valued
bilinear mappings by generalizing \cite[Theorem 3.5]{bot97} to arbitrary
infinite-dimensional spaces $X_{1},X_{2},Y$.

\begin{proposition}
Let $X_{1}, X_{2}$ and $Y$ be infinite-dimensional Banach spaces and let
$p_{1}, p_{2} \geq1$. Then there exists a continuous bilinear mapping $A
\colon X_{1} \times X_{2} \longrightarrow Y$ which fails to be $(p_{1},p_{2})$-dominated.
\end{proposition}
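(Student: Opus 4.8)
The plan is to split according to whether or not the range space $Y$ has finite cotype. Assume first that $Y$ has no finite cotype. Applying Corollary \ref{outro corolario} to the pair $X_1,Y$ and to $p_1$ we obtain a continuous linear operator $u\colon X_1\longrightarrow Y$ which is not $p_1$-summing. Fix $\varphi\in X_2'$ with $\varphi\neq 0$ and $x_2^0\in X_2$ with $\varphi(x_2^0)=1$, and define $A\colon X_1\times X_2\longrightarrow Y$ by $A(x_1,x_2)=\varphi(x_2)\,u(x_1)$; clearly $A$ is a continuous bilinear mapping. If $A$ were $(p_1,p_2)$-dominated, the Pietsch Domination Theorem for dominated multilinear mappings (see \cite{BP-proc,Anais}) would furnish a constant $C>0$ and regular Borel probability measures $\mu$ on $B_{X_1'}$ and $\nu$ on $B_{X_2'}$ with
\[
\|A(x_1,x_2)\|\leq C\Bigl(\int_{B_{X_1'}}|\psi(x_1)|^{p_1}\,d\mu(\psi)\Bigr)^{1/p_1}\Bigl(\int_{B_{X_2'}}|\psi(x_2)|^{p_2}\,d\nu(\psi)\Bigr)^{1/p_2}.
\]
Taking $x_2=x_2^0$ and $D:=\bigl(\int|\psi(x_2^0)|^{p_2}d\nu\bigr)^{1/p_2}$ we get $\|u(x_1)\|\leq CD\bigl(\int|\psi(x_1)|^{p_1}d\mu\bigr)^{1/p_1}$ for all $x_1\in X_1$, which is exactly the Pietsch inequality characterizing $p_1$-summing operators \cite[Chapter~2]{Diestel}; hence $u\in\Pi_{p_1}(X_1;Y)$, a contradiction. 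So $A$ is not $(p_1,p_2)$-dominated.

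Now suppose $Y$ has finite cotype, so that the elementary device above is unavailable. Here I would transfer a finite-dimensional model. By Dvoretzky's theorem $\ell_2$ is finitely representable in each of $X_1,X_2,Y$ and also in $X_1'$ and $X_2'$. For each $n$ let $\phi_n\colon\ell_2^n\times\ell_2^n\longrightarrow\ell_2^n$ be the coordinatewise product $\phi_n\bigl((s_i)_i,(t_i)_i\bigr)=(s_it_i)_i$, so that $\|\phi_n\|=1$; evaluating $\phi_n$ at the pair of canonical systems $(e_i)_{i=1}^n,(e_i)_{i=1}^n$ gives the output $(e_i)_{i=1}^n$, whose norm in $\ell_q(\ell_2^n)$ (with $\tfrac1q=\tfrac1{p_1}+\tfrac1{p_2}$) is $n^{1/q}=n^{1/p_1+1/p_2}$, while $\|(e_i)_{i=1}^n\|_{w,p_k}^{\ell_2^n}=n^{\max\{0,\,1/p_k-1/2\}}$. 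Since $\tfrac1{p_1}+\tfrac1{p_2}>\max\{0,\tfrac1{p_1}-\tfrac12\}+\max\{0,\tfrac1{p_2}-\tfrac12\}$ in every case, the $(p_1,p_2)$-domination ``norm'' of $\phi_n$ tends to $+\infty$ although $\|\phi_n\|=1$.

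To transfer $\phi_n$ I would avoid composing with projections onto Euclidean subspaces (their complementation constants would introduce a fatal factor $\sqrt n$), and instead realize the transferred map as a finite-rank bilinear form $B_n(x_1,x_2)=\sum_{j=1}^n a_j^\ast(x_1)\,b_j^\ast(x_2)\,y_j$, where $(a_j^\ast)_{j=1}^n\subseteq X_1'$ and $(b_j^\ast)_{j=1}^n\subseteq X_2'$ are $(1+\varepsilon)$-equivalent to the unit vector basis of $\ell_2^n$ (available from Dvoretzky applied to the duals) and $(y_j)_{j=1}^n\subseteq Y$ is $(1+\varepsilon)$-equivalent to the unit vector basis of $\ell_2^n$. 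Being Euclidean in $X_1'$ and $X_2'$, the systems $(a_j^\ast)_j,(b_j^\ast)_j$ have weak-$\ell_2$ norms at most $1+\varepsilon$, whence $\|B_n(x_1,x_2)\|\leq(1+\varepsilon)\bigl\|(a_j^\ast(x_1)b_j^\ast(x_2))_j\bigr\|_2\leq(1+\varepsilon)\bigl\|(a_j^\ast(x_1))_j\bigr\|_\infty\bigl\|(b_j^\ast(x_2))_j\bigr\|_2\leq(1+\varepsilon)^4\|x_1\|\,\|x_2\|$, so $\sup_n\|B_n\|<\infty$. Then I would test $B_n$ on systems $(x_i^1)_{i=1}^n\subseteq X_1$ and $(x_i^2)_{i=1}^n\subseteq X_2$ that are $(1+\varepsilon)$-Euclidean (so their weak-$\ell_{p_k}$ norms are $\lesssim n^{\max\{0,1/p_k-1/2\}}$) and biorthogonal to $(a_j^\ast)_j$, $(b_j^\ast)_j$ (so that $B_n(x_i^1,x_i^2)=y_i$); this forces $\sup_n$ of the $(p_1,p_2)$-domination ``norm'' of $B_n$ to be $+\infty$ while $\sup_n\|B_n\|<\infty$. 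Since the $(p_1,p_2)$-dominated bilinear mappings $X_1\times X_2\to Y$ form a Banach space continuously embedded in $\mathcal{L}(X_1,X_2;Y)$, the Open Mapping Theorem then shows these spaces are distinct, producing the desired mapping.

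The hard part is the construction of $B_n$ in the finite-cotype case with $\|B_n\|$ bounded uniformly in $n$: one has to reconcile, on each of the two domain sides, the requirement that the coefficient system be Euclidean in the dual (which makes $\|B_n\|$ bounded) with the requirement that the test system be Euclidean in the space itself and biorthogonal to the coefficient system (which keeps the weak-$\ell_{p_k}$ norms small while making the output as large as $(y_i)_{i=1}^n$). Making these Euclidean estimates and the biorthogonality hold simultaneously is where Dvoretzky's theorem must be invoked carefully, and is the step on which the whole argument in the second case rests; the first case, by contrast, is a short formal consequence of Corollary \ref{outro corolario} and Pietsch domination.
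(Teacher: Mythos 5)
Your first case is fine as far as it goes: tensoring a non-$p_1$-summing operator $u\colon X_1\to Y$ (from Corollary \ref{outro corolario}) with a functional on $X_2$ and invoking the Pietsch domination theorem for dominated bilinear mappings does yield a non-dominated map when $Y$ has no finite cotype. But the whole content of the proposition lies in the second case, and there your construction has a genuine gap --- in fact the requirements you impose cannot be met simultaneously in general. You ask for $(a_j^\ast)_{j=1}^n\subseteq X_1'$ and a biorthogonal system $(x_i^1)_{i=1}^n\subseteq X_1$, each $(1+\varepsilon)$-equivalent to the unit vector basis of $\ell_2^n$. Put $U\colon X_1\to\ell_2^n$, $U(x)=(a_j^\ast(x))_j$, and $V\colon\ell_2^n\to X_1$, $V(c)=\sum_j c_jx_j^1$; then $\Vert U\Vert\leq 1+\varepsilon$, $\Vert V\Vert\leq 1+\varepsilon$, and biorthogonality gives $UV=\mathrm{id}_{\ell_2^n}$, so $VU$ is a projection of $X_1$, of norm at most $(1+\varepsilon)^2$, onto an $n$-dimensional $(1+\varepsilon)^2$-Euclidean subspace. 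For $X_1=L_1$ (infinite-dimensional, cotype $2$, so squarely inside your second case) this is impossible for large $n$: by Grothendieck's theorem such a projection would have $\pi_1$-norm bounded independently of $n$, forcing $\pi_1(\mathrm{id}_{\ell_2^n})\leq K_G(1+\varepsilon)^4$, whereas $\pi_1(\mathrm{id}_{\ell_2^n})\geq\sqrt{n}$. So the step you yourself flag as ``the hard part on which the whole argument rests'' is not merely unfinished; the Euclidean-plus-biorthogonal systems you need do not exist in arbitrary infinite-dimensional $X_1$, and the scheme would have to be redesigned, not just completed.

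The paper sidesteps the case distinction entirely by relocating the cotype obstruction. Assume every continuous bilinear map $X_1\times X_2\to Y$ is $(p_1,p_2)$-dominated. An adaptation of \cite[Lemma 3.4]{bot97} linearizes this to: every continuous linear operator from $X_1$ into $\mathcal{L}(X_2;Y)$ is $p_1$-summing. The decisive observation is that $\mathcal{L}(X_2;Y)$ \emph{never} has finite cotype when $X_2$ and $Y$ are infinite-dimensional \cite[Proposition 19.17]{Diestel}, regardless of the cotype of $Y$ itself. Hence Corollary \ref{outro corolario}, applied with range space $\mathcal{L}(X_2;Y)$, produces an operator $X_1\to\mathcal{L}(X_2;Y)$ that is not $p_1$-summing --- a contradiction. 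In other words, the failure of finite cotype that you were trying to manufacture by hand inside $Y$ is already available, for free, in the operator space $\mathcal{L}(X_2;Y)$; that single observation replaces your entire second case.
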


\begin{proof} Suppose, by contradiction, that every continuous bilinear mapping from $X_1 \times X_2$ to $Y$
is $(p_1,p_2)$-dominated. A straightforward adaptation of the proof of \cite[Lemma 3.4]{bot97} gives that
every continuous linear operator from $X_1$ to ${\cal L}(X_2;Y)$ is $p_1$-summing. From \cite[Proposition
19.17]{Diestel} we know that ${\cal L}(X_2;Y)$ has no finite cotype, so Corollary \ref{outro corolario}
assures that there is a continuous linear operator from $X_1$ to ${\cal L}(X_2;Y)$ which fails to be
$p_1$-summing. This contradiction completes the proof.
\end{proof}

The same reasoning extends \cite[Lemma 5.4]{Jarc} to $(p_{1}, \ldots, p_{n}%
)$-dominated $n$-linear mappings (for eventually different $p_{1}, \ldots,
p_{n}$) on $X_{1} \times\cdots\times X_{n}$ (for eventually different spaces
$X_{1}, \ldots, X_{n}$):

\begin{proposition}
Let $n\geq3$, $X_{1},\ldots,X_{n}$ be Banach spaces at least three of them
infinite-dimensional and let $p_{1},\ldots,p_{n}\geq1$. Then there exists a
continuous $n$-linear form $A\colon X_{1}\times\cdots\times X_{n}%
\longrightarrow\mathbb{K}$ which fails to be $(p_{1},\ldots,p_{n})$-dominated.
\end{proposition}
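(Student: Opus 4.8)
The plan is to mimic the proof of the previous proposition, reducing the $n$-linear problem to a linear one by "currying off" all but one variable. Suppose for contradiction that every continuous $n$-linear form $A\colon X_{1}\times\cdots\times X_{n}\longrightarrow\mathbb{K}$ is $(p_{1},\ldots,p_{n})$-dominated. Relabelling if necessary, I may assume $X_{1},X_{2},X_{3}$ are infinite-dimensional. First I would invoke the multilinear analogue of \cite[Lemma 3.4]{bot97}: a straightforward adaptation shows that every continuous $(n-1)$-linear mapping $B\colon X_{1}\times\cdots\times X_{n-1}\longrightarrow X_{n}^{\prime}=\mathcal{L}(X_{n};\mathbb{K})$ is $(p_{1},\ldots,p_{n-1})$-dominated, since an $(n-1)$-linear map into $X_{n}^{\prime}$ corresponds canonically to an $n$-linear form on $X_{1}\times\cdots\times X_{n}$ and the domination estimates transfer.

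Next I would iterate this reduction. Applying the same currying step $n-3$ more times, peeling off the variables $x^{n-1},x^{n-2},\ldots,x^{3}$ one at a time, I arrive at the assertion that every continuous bilinear mapping from $X_{1}\times X_{2}$ into the iterated dual/operator space $Z:=\mathcal{L}(X_{3};\mathcal{L}(X_{4};\cdots\mathcal{L}(X_{n};\mathbb{K})\cdots))$ is $(p_{1},p_{2})$-dominated. At this point the preceding Proposition applies directly, provided $X_{1},X_{2}$ and $Z$ are all infinite-dimensional: $X_{1}$ and $X_{2}$ are so by assumption, and $Z$ is infinite-dimensional because $X_{3}$ is infinite-dimensional (an operator space $\mathcal{L}(X_{3};W)$ with $X_{3}$ infinite-dimensional and $W\neq\{0\}$ contains an isometric copy of $X_{3}^{\prime}$, hence is infinite-dimensional). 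The Proposition then furnishes a continuous bilinear mapping $X_{1}\times X_{2}\longrightarrow Z$ that fails to be $(p_{1},p_{2})$-dominated, contradicting what we derived. Therefore some continuous $n$-linear form on $X_{1}\times\cdots\times X_{n}$ fails to be $(p_{1},\ldots,p_{n})$-dominated.

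Alternatively, and perhaps more cleanly, I could collapse all the currying in one stroke: curry off variables $x^{3},\ldots,x^{n}$ simultaneously, identifying $n$-linear forms on $X_{1}\times\cdots\times X_{n}$ with bilinear mappings from $X_{1}\times X_{2}$ into the $(n-2)$-linear-form space, and check that $(p_{1},\ldots,p_{n})$-domination of the form corresponds to $(p_{1},p_{2})$-domination of the associated bilinear map into that space (with the remaining indices $p_{3},\ldots,p_{n}$ absorbed into the target). The space of continuous $(n-2)$-linear forms on $X_{3}\times\cdots\times X_{n}$ is infinite-dimensional since $X_{3}$ is, so again the previous Proposition yields the contradiction.

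The main obstacle I expect is the bookkeeping in the currying step: one must verify carefully that the multilinear version of \cite[Lemma 3.4]{bot97} really does give $(p_{1},\ldots,p_{n-1})$-domination of the $(n-1)$-linear mapping into $X_{n}^{\prime}$ — in particular that the weakly $p_{k}$-summable sequences and the $\ell_{q}$-norm bound on the target transfer correctly through the canonical isometry between multilinear forms and operator-valued multilinear maps, and that the exponent $q$ with $\frac1q=\frac1{p_1}+\cdots+\frac1{p_n}$ matches up after absorbing the last index. This is routine (it is exactly the kind of adaptation the text already alludes to for the bilinear case) but it is the only place where anything needs to be checked; everything else is a direct appeal to the previous Proposition and to the fact that the relevant operator/form spaces remain infinite-dimensional.
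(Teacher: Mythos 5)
Your argument is correct and is essentially the route the paper intends: the paper gives no separate proof, stating only that ``the same reasoning extends'' the bilinear case, and your currying reduction (via the Pietsch-type domination transfer, i.e.\ the adaptation of \cite[Lemma 3.4]{bot97}) down to a bilinear map into the infinite-dimensional space $\mathcal{L}(X_{3},\ldots,X_{n};\mathbb{K})$, followed by an appeal to the preceding Proposition, is exactly that reasoning. Whether you iterate one variable at a time or curry all at once, you end up invoking the same ingredients the paper uses (the no-finite-cotype fact from \cite[Proposition 19.17]{Diestel} and Corollary \ref{outro corolario}), so there is nothing substantively different to compare.
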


\textbf{Acknowledgement.} The authors thank Joe Diestel for helpful
conversations on the subject of this paper.

\vspace*{1em} \noindent[Geraldo Botelho] Faculdade de Matem\'atica,
Universidade Federal de Uberl\^andia, 38.400-902 - Uberl\^andia, Brazil,
e-mail: botelho@ufu.br.

\medskip

\noindent[Daniel Pellegrino] Departamento de Matem\'atica, Universidade
Federal da Pa-ra\'iba, 58.051-900 - Jo\~ao Pessoa, Brazil, e-mail: dmpellegrino@gmail.com.

\end{document}